\newcommand\R{{\mathbb{R}}}
\newcommand\C{{\mathbb{C}}}
\renewcommand\P{{\mathbf{P}}}
\newcommand\E{{\mathbf{E}}}
\newcommand\eps{{\varepsilon}}
\newcommand\tr{\operatorname{trace}}
\theoremstyle{plain}
  \newtheorem{theorem}{Theorem}
  \newtheorem{proposition}[theorem]{Proposition}
\theoremstyle{definition}
  \newtheorem{definition}[theorem]{Definition}
  \newtheorem{remark}[theorem]{Remark}
\begin{document}

\title[Bulk universality for Wigner matrices]{Bulk universality
 for Wigner hermitian matrices with subexponential decay}

\author[L. Erd\H os]{L\'aszl\'o Erd\H{o}s}
\address{Institute of Mathematics, University of Munich, Theresienstr.
 39, D-80333 Munich, Germany} 
\email{lerdos@math.lmu.de}
\thanks{L. Erd\H{o}s is partially supported by SFB-TR 12 Grant of the 
German Research Council.}

\author[J. Ram\'irez]{Jos\'e Ram\'irez}
\address{Department of Mathematics, Universidad de Costa Rica
San Jose 2060, Costa Rica} 
\email{ALEXANDER.RAMIREZGONZALEZ@ucr.ac.cr}
\thanks{}

\author[B. Schlein]{Benjamin Schlein}
\address{Department of Pure Mathematics and Mathematical Statistics,
University of Cambridge, Wilberforce Rd, Cambridge CB3 0WB, UK} 
\email{b.schlein@dpmms.cam.ac.uk}
\thanks{}

\author[T. Tao]{Terence Tao}
\address{Department of Mathematics, UCLA, Los Angeles CA 90095-1555}
\email{tao@math.ucla.edu}
\thanks{T. Tao is is supported by a grant from the MacArthur Foundation,
 by NSF grant DMS-0649473, and by the NSF Waterman award.}

\author[V. Vu]{Van Vu}
\address{Department of Mathematics, Rutgers, Piscataway, NJ 08854}
\email{vanvu@math.rutgers.edu}
\thanks{V. Vu is supported by NSF grant  DMS-0901216 and DOD grant  AFOSAR-FA-9550-09-1-0167}

\author[H.-T. Yau]{Horng-Tzer Yau}
\address{Department of Mathematics, Harvard University
Cambridge MA 02138, USA4} 
\email{htyau@math.harvard.edu}
\thanks{H.-T. Yau is partially supported by NSF grants DMS-0757425, DMS-0804279}

\begin{abstract} We consider the ensemble of
$n \times n$ Wigner hermitian matrices
 $H = (h_{\ell k})_{1 \leq \ell,k \leq n}$ that
 generalize the Gaussian unitary ensemble (GUE).
 The matrix elements 
$h_{k\ell} =  \bar  h_{ \ell k}$ are given by
 $h_{\ell k} = n^{-1/2}
 ( x_{\ell k} + \sqrt{-1} y_{\ell k} )$, where $x_{\ell k}, y_{\ell k}$ for
 $1 \leq \ell < k \leq n$ are i.i.d. random variables with mean zero and variance 
$1/2$, $y_{\ell\ell}=0$ and $x_{\ell \ell}$
have mean zero and variance $1$.
 We assume the distribution of
$x_{\ell k}, y_{\ell k}$ to have subexponential decay.  
In \cite{ERSY2}, four of the authors recently established that the gap distribution 
and averaged $k$-point correlation of these matrices were \emph{universal}
 (and in particular, agreed with those for GUE) assuming additional 
regularity hypotheses on the  $x_{\ell k}, y_{\ell k}$.
In \cite{TVbulk}, the other two authors, using a different method, established
 the same conclusion assuming instead some moment and support conditions 
on the $x_{\ell k}, y_{\ell k}$.  In this short note we observe that the 
arguments  of \cite{ERSY2} and \cite{TVbulk} can be combined to establish 
universality of the gap distribution and averaged $k$-point correlations 
for all Wigner matrices (with subexponentially decaying entries), with no 
extra assumptions.
\end{abstract}

\maketitle

\section{Introduction}



This paper is concerned with the bulk eigenvalue statistics of large 
Hermitian Wigner random matrices, which we now pause to define.

\begin{definition}[Wigner matrices]\label{def:Wignermatrix} 
Let $n$ be a large number. A \emph{Wigner Hermitian matrix} (of size $n$)
 is defined as  a random Hermitian $n \times n$ matrix $H=H_n$ with upper
triangular complex entries $h_{\ell k} = n^{-1/2} 
(x_{\ell k} + \sqrt{-1} y_{\ell k})$ for $1 \leq \ell < k \leq n$ 
and diagonal real entries $h_{\ell \ell} = n^{-1/2} x_{\ell \ell}$ 
for $1 \leq \ell \leq n$, where

\begin{itemize}
\item For $1 \leq \ell < k \leq n$, the $x_{\ell k}, y_{\ell k}$ are 
i.i.d. copies of a real random variable $x$ with  mean zero and variance $1/2$.

\item For $1 \leq \ell \leq n$, the $x_{\ell \ell}$ are i.i.d. copies of 
a real random variable $\tilde x$
with mean zero and  variance $1$.

\item $x, \tilde x$ have subexponential decay, i.e., there are 
constants $C, C'$ such that 
$\P( |x| \ge t^C) \le \exp(- t), \;  \P( |\tilde x| \ge t^C) \le \exp(- t)$, 
for all $t \ge C'$.
\end{itemize} \end{definition}

We refer to $x$ as the \emph{atom distribution} of $H$.
 An important special case of a Wigner Hermitian
 matrix is the \emph{Gaussian unitary ensemble} (GUE), in which $x, \tilde x$ 
are Gaussian random variables with mean zero and variance $1/2$, $1$ respectively.

Let $\lambda_1 \leq \ldots \leq \lambda_n$ be the eigenvalues of a Wigner Hermitian
 matrix $H_n$, and let $p_n(x_1,\ldots,x_n)$ be the associated (symmetric) density function, normalized to have total mass $1$.  For any $1 \leq k \leq n$,  the 
$k$-point correlation function $p^{(k)}_n(x_1,\ldots,x_k)$ is
 defined by the formula 
$$ 
p^{(k)}_n(x_1,\ldots,x_k) :=  \int_{\R^{n-k}} p_n(x_1,\ldots,x_n)\ 
dx_{k+1} \ldots dx_n.
$$

Here we follow the convention that the correlation functions are $L^1$-normalized.
Other conventions are also used in the literature. e.g. the
correlation functions $R_k$ used in \cite{Deibook, Joh} are related to
$p^{(k)}_n$ via $R_k = \frac{n!}{(n-k)!} p^{(k)}_n$. 

The main results of this note are as follows.

\begin{theorem}[Universality of averaged correlation function]\label{main1} 
 Fix $\eps >0$ and $u$ such that
 $-2 < u - \eps < u+\eps < 2$. Let $k \geq 1$ and
 let $f: \R^k \to \R$ be a continuous, compactly supported function,
 and let $H=H_n$ be a Wigner random matrix.  Then the quantity
\begin{equation}\label{episode}
 \frac{1}{2\eps} \int_{u-\eps}^{u+\eps} \int_{\R^k} f(\alpha_1,\ldots,\alpha_k)
 \frac{1}{[\rho_{sc}(u')]^k} p_n^{(k)}\Big(u'+\frac{\alpha_1}{n \rho_{sc}(u')}, 
\ldots, u'+\frac{\alpha_k}{n \rho_{sc}(u')}\Big)\ d\alpha_1 \ldots d\alpha_k du'
 \end{equation}
converges as $n \to \infty$ to
$$ 
\int_{\R^k} f(\alpha_1,\ldots,\alpha_k) 
\det( K(\alpha_i,\alpha_j) )_{i,j=1}^k\ d\alpha_1 \ldots d\alpha_k,
$$
where $K(x,y)$ is the \emph{Dyson sine kernel}
\begin{equation}\label{dyson}
 K(x,y) := \frac{\sin( \pi(x-y) )}{\pi(x-y)}
\end{equation}
and $\rho_{sc}(u)$ is the \emph{Wigner semicircle distribution}
\begin{equation}
 \rho_{sc}(u) := \frac{1}{2\pi} (4-u^2)_+^{1/2}.
\label{sclaw}
\end{equation}
\end{theorem}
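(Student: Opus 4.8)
The plan is to combine the two existing approaches exactly as the abstract promises: use the method of \cite{TVbulk} (the Four Moment Theorem together with a local-eigenvalue-rigidity/localization input) to reduce the universality statement for an arbitrary Wigner matrix $H$ with subexponentially decaying atom distribution $x$ to the same statement for a matrix $H'$ whose atom distribution $x'$ matches $x$ in the first four moments but enjoys the extra regularity hypotheses required in \cite{ERSY2}; then invoke \cite{ERSY2} to conclude that the averaged $k$-point correlation of $H'$ converges to the $\det(K(\alpha_i,\alpha_j))$ expression; and finally transfer this back to $H$. The averaging over $u' \in [u-\eps,u+\eps]$ in \eqref{episode}, together with the local semicircle law, is precisely what makes the error terms in the Four Moment comparison summable, so the averaged form of the statement is the natural one to prove.

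In more detail, the \textbf{first step} is to construct the comparison ensemble. Given the atom distribution $x$ (mean $0$, variance $1/2$, subexponential decay), one builds a random variable $x'$ with the same first four moments as $x$ but which additionally satisfies whatever smoothness/support conditions \cite{ERSY2} needs (e.g.\ a distribution with a smooth density, or one obtained by convolving with a small Gaussian and rescaling, or an explicit Bernoulli-type matching of moments followed by Gaussian smoothing). Here one must check that the moment-matching conditions of \cite{TVbulk} (agreement to fourth order, with the third and fourth moments allowed to differ by $O(n^{-\delta})$ rather than exactly) can be met simultaneously with the hypotheses of \cite{ERSY2}; this is a routine but slightly delicate construction, and it is the one place where one has to be careful about exactly which side conditions each paper imposes. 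The diagonal atom distribution $\tilde x$ is handled the same way (or simply left alone, since the diagonal contributes negligibly to bulk statistics).

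The \textbf{second step} is to apply the Four Moment Theorem of \cite{TVbulk}: since $H$ and $H'$ have atom distributions agreeing to fourth order, any sufficiently regular statistic of $k$ consecutive eigenvalues in the bulk (in particular the smoothed, rescaled $k$-point correlation against the fixed continuous compactly supported $f$) has the same limit for $H$ as for $H'$, up to an error that tends to $0$. One subtlety is that the Four Moment Theorem is most cleanly stated for individual eigenvalues $\lambda_{i}$ with $i$ in the bulk, so one first rewrites \eqref{episode} in terms of the joint distribution of eigenvalue gaps near energy $u$ (using the semicircle law $\rho_{sc}$ to pass between the energy variable $u'$ and the index variable $i \approx n \int_{-2}^{u'} \rho_{sc}$), then applies the comparison, then translates back; the integration in $u'$ and the continuity and compact support of $f$ are what allow the $o(1)$ errors to be integrated out. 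The \textbf{third step} is then immediate: \cite{ERSY2} applies to $H'$ (which satisfies its regularity hypotheses) and gives that the expression \eqref{episode} with $H$ replaced by $H'$ converges to $\int f \det(K(\alpha_i,\alpha_j))\,d\alpha$, and combining with Step 2 finishes the proof.

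The \textbf{main obstacle} is bookkeeping rather than any new idea: one must verify that the precise hypotheses of \cite{TVbulk} (moment matching to order four, the required localization/level-repulsion estimates, and the class of test statistics covered) and of \cite{ERSY2} (the regularity assumptions on the atom distribution, and the precise form of convergence it yields) are mutually compatible and that the reductions line up — in particular, that the averaged correlation functional in \eqref{episode} is exactly of the form to which the Four Moment Theorem applies after the change of variables from energy to eigenvalue index, and that the subexponential-decay hypothesis (rather than the stronger moment/support conditions of either source paper) suffices to run the comparison. No genuinely new estimate is needed; the content of the note is the observation that these two black boxes fit together.
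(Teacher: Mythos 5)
Your high-level plan (compare $H$ to a Gaussian-perturbed $H'$ via the Four Moment Theorem, then invoke \cite{ERSY2} for $H'$) matches the paper's strategy, and you correctly anticipate that the moment matching can be allowed to hold only approximately. However, your Step~1 as stated has a genuine gap. You propose to build $x'$ that both matches $x$ to fourth order and satisfies the regularity hypotheses of \cite{ERSY2} for Wigner matrices with no Gaussian component (density $e^{-g}$ with $g\in C^6$, etc.). This is impossible in general: if $x$ is the Bernoulli $\pm 1$ variable, then $\E x^4 = (\E x^2)^2$, which forces any real variable matching $x$ to fourth order to again be two-valued, never smooth. This is precisely the obstruction that produced the ``supported on at least three points'' hypothesis in \cite{TVbulk}. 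Your alternative suggestion of convolving $x$ with a small Gaussian and rescaling does give approximate matching, but then the density of $x'$ depends on $n$ (variance of the Gaussian component $\sim n^{-1+\delta}$), so the regularity constants in the \cite{ERSY2} hypotheses for Wigner matrices with no Gaussian part degenerate as $n\to\infty$, and that version of their result cannot be applied uniformly.

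The fix, which is what the paper actually does, is to set $H' := e^{-t/2}H + (1-e^{-t})^{1/2}V$ with $t=n^{-1+\delta}$ and to invoke a \emph{different} input from \cite{ERSY2}: not the universality theorem for Wigner matrices with smooth atom distribution (which involves the reverse heat flow and the $C^6$ density hypothesis), but the underlying Proposition~3.1 for Gaussian-divisible (Johansson-type) matrices at short time $t = n^{-1+\delta}$. The only hypothesis needed there is the local semicircle law for $H$ at scale $n^{-1+\delta}$, as in \eqref{eq:semi}, which holds under subexponential decay alone — no regularity of $x'$ is required. With that substitution your Steps~2 and~3 go through: rewrite \eqref{episode} as the eigenvalue sum \eqref{central}, use the local semicircle law to restrict to $O(n\log^{O(1)}n)$ index tuples in the bulk, and apply the approximate four-moment comparison (inspecting the proof of Theorem~15 of \cite{TVbulk} to see that differences of order $n^{-1+0.01}$ in the third and fourth moments contribute $O(n^{-3+o(1)}n^{3/2}n^{-1+0.01})$ and $O(n^{-4+o(1)}n^{2}n^{-1+0.01})$ respectively, which is acceptable).
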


\begin{theorem}[Universality of averaged eigenvalue gap]\label{main2} 
Fix  $u$ such that $-2 < u<   2$, choose $k \geq 1$ and $s>0$, 
and let $H$ be a Wigner random matrix.  Then the expectation value of the 
empirical gap distribution
$$ 
  \Lambda_n(u, s, \eps) = 
 \frac{1}{2\eps n \rho_{sc}(u)} \# \Big\{ 1 \leq j < n: \lambda_{j+1} - \lambda_j 
\leq \frac{s}{n \rho_{sc}(u)}; |\lambda_j - u| \leq \eps \Big\}
$$
satisfies
\begin{equation}
\lim_{\eps\to0}
\lim_{n\to\infty} \E \,  \Lambda_n(u, s, \eps) =  \int_0^s \frac{d^2}{d\alpha^2} 
\det(1 - {\mathcal K}_\alpha)\ d\alpha,
\label{limgap}
\end{equation}
where ${\mathcal K}_\alpha$ 
is the integral operator on $L^2((0,\alpha))$ whose kernel is the Dyson 
sine kernel \eqref{dyson}.
\end{theorem}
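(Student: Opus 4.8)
The plan is to deduce Theorem~\ref{main2} from the correlation--function universality of Theorem~\ref{main1} by a general mechanism (following \cite{TVbulk}): rewrite the expected empirical gap count as an alternating series in the correlation functions, pass to the limit term by term, and use a priori bounds on the local eigenvalue density to control the tail of the series. First I would localize. Partition $[u-\eps,u+\eps]$ into $\sim 2\eps n\rho_{sc}(u)/L$ consecutive windows of microscopic length $L/(n\rho_{sc}(u))$, with $L$ a large constant. On each window the number of consecutive eigenvalue pairs at distance $\le s/(n\rho_{sc}(u))$ with left endpoint inside is, up to boundary corrections of relative size $O(s/L)$ coming from gaps that straddle a window boundary, a functional of the rescaled configuration in a fixed-length microscopic window; and on the range $|\lambda_j-u|\le\eps$ one has $\rho_{sc}(\lambda_j)=\rho_{sc}(u)+O(\eps)$. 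Averaging over the windows and over the $\eps$-neighborhood therefore reduces the theorem to the convergence of the expected gap functional on a single fixed microscopic window, with the residual $O(s/L)+O(\eps)$ discrepancies removed by sending $L\to\infty$ and $\eps\to0$ at the end (this is the role of the outer limit in \eqref{limgap}).

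On a fixed window I would invoke the exact combinatorial identity $\mathbf 1[(a,b)\text{ contains no eigenvalue}]=\sum_{m\ge0}(-1)^m\binom{N_{(a,b)}}{m}$ together with $\E\binom{N_{(a,b)}}{m}=\frac{1}{m!}\int_{(a,b)^m}R_m$, where $R_m=\frac{n!}{(n-m)!}p_n^{(m)}$ is the factorial moment density. Applying this with $(a,b)$ running over the gaps themselves turns the expected gap count into the series
\begin{equation*}
\sum_{m\ge0}\frac{(-1)^m}{m!}\int d\lambda\,\mathbf 1[\lambda\in W]\int_\lambda^{\lambda+s/(n\rho_{sc}(u))}d\lambda'\int_{(\lambda,\lambda')^m}dz\; R_{m+2}(\lambda,\lambda',z_1,\dots,z_m),
\end{equation*}
which for finite $n$ is a finite sum (each $N_{(a,b)}\le n$), so the identity is exact. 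After the rescaling by $n\rho_{sc}(u)$ and the $u'$-average of \eqref{episode}, and after a routine smoothing of the indicators (whose boundary contributions are negligible), Theorem~\ref{main1} identifies the limit of each rescaled $R_{m+2}$ with $\det(K(\alpha_i,\alpha_j))_{i,j=1}^{m+2}$; by Hadamard's inequality $|\det(K(\alpha_i,\alpha_j))_{i,j=1}^{m+2}|\le (m+2)^{(m+2)/2}$ and all $m+2$ points lie in an interval of length $\le s$, so the term-by-term limits assemble into a convergent series, which is precisely the Gaudin--Mehta expansion of $\int_0^s\frac{d^2}{d\alpha^2}\det(1-{\mathcal K}_\alpha)\,d\alpha$. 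Since the very same identity holds for GUE, where the left-hand side is classically known to converge to this quantity (see \cite{Deibook}), matching the two limits forces the Wigner limit to coincide with it.

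The main obstacle is the interchange of $\lim_{n\to\infty}$ with the infinite sum: one needs a bound on the rescaled factorial moments $\E[(N_W)_{m+2}]$ of the number of eigenvalues in a microscopic window $W$ that is uniform in $n$ and summable after division by $m!$ — equivalently, a uniform-in-$n$ sub-Poissonian (e.g.\ $C^{j}/j!$) tail for $N_W$, or a $C^m m!^{1-\delta}$ bound on the correlation functions restricted to bounded windows. This is exactly the a priori input available for Wigner matrices with subexponential decay: the local semicircle law used in \cite{ERSY2}, and the counting-function and level-repulsion estimates of \cite{TVbulk}, give (with failure probability small enough to absorb the combinatorial factors) an $n^{o(1)}$ — indeed $O(\log n)$ — upper bound on $N_W$ and the required control of its moments, furnishing the dominated-convergence bound. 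With this in hand the interchange is justified; the outer limits $L\to\infty$ and then $\eps\to0$ then merely discard the boundary and density-approximation errors noted above, completing the proof.
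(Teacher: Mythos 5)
Your route is genuinely different from the paper's. The paper does \emph{not} deduce Theorem \ref{main2} from Theorem \ref{main1}: it first proves \eqref{limgap} for the Gaussian-perturbed matrix $H'$ by rerunning the argument of \cite[Theorem 1.2]{ERSY2} (which rests on the explicit Johansson-type formula, with the microscopic averaging window $n^{-1+\delta}$ enlarged to a fixed $\eps$), and then transfers the result from $H'$ to $H$ by writing $\Lambda_n$, via inclusion--exclusion, as an alternating series of observables of the form \eqref{central} and applying the approximate four moment comparison to each term. You instead derive Theorem \ref{main2} for $H$ directly from the already-established Theorem \ref{main1}, using the standard identity expressing the expected gap count through the factorial moment densities $R_{m+2}$ and passing to the limit term by term. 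This is a legitimate and arguably more economical logical path (it reuses Theorem \ref{main1} rather than reproving a gap statement for $H'$ and re-running the moment transfer), and your handling of the $\rho_{sc}(u')$ versus $\rho_{sc}(u)$ discrepancy via the outer $\eps\to 0$ limit and the continuity in $s$ of the limit matches what the paper does. The windowing into blocks of length $L/(n\rho_{sc}(u))$ is an unnecessary detour — the inclusion--exclusion identity applies directly on $[u-\eps,u+\eps]$, and the $u'$-average built into \eqref{episode} already absorbs the pinned base point $\lambda$ of $R_{m+2}$ into a continuous compactly supported test function.

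The one step you should not wave through is the interchange of $\lim_{n\to\infty}$ with the infinite alternating sum. You propose dominated convergence from a uniform-in-$n$, summable-in-$m$ bound on the rescaled factorial moments, and cite the local semicircle law and an $O(\log n)$ bound on $N_W$. Neither delivers what you need: an almost-sure bound $N_W=O(\log n)$ gives $\E\binom{N_W}{m}\le (C\log n)^m/m!$, whose sum over $m$ is $n^{C}$, and the bound $\E N_I^k\le C_k$ from \cite[Theorem 5.1]{ESY3} (the one actually invoked in this paper) is stated per fixed $k$ with no control on the growth of $C_k$ in $k$, so summability against $1/m!$ is not available off the shelf. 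The standard repair — and the one the paper uses ("controlling the alternating series by its last term") — is the Bonferroni bracketing: the partial sums of $\sum_m(-1)^m\E\binom{N}{m}$ alternately over- and under-estimate $\P(N=0)$, so for each fixed truncation level $M$ one takes $n\to\infty$ in finitely many terms via Theorem \ref{main1}, and the residual bracket width in the limit is the $(M+1)$-st term of the sine-kernel series, which vanishes as $M\to\infty$ by the Hadamard bound you already quote. With that substitution (and the same uniform local density bounds you mention to control the boundary layers when smoothing the indicators), your argument closes.
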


We now briefly recall the previous partial results towards these theorems. 
 In the case of GUE, the proofs of Theorems \ref{main1}, \ref{main2} can be 
found in \cite{Meh}.  These proofs relied  heavily on the
 explicit formula for the $k$-point correlation functions in the GUE case. 
 An important extension  was made by Johansson \cite{Joh}, who established the above
 theorems in \emph{Gaussian divisible} case when the matrix $H$ took the form  
$\hat H + a V$, where $\hat H$ is another Wigner matrix, $V$ is a GUE matrix independent
 of $\hat H$ and $a>0$ is an arbitrary fixed constant
independent of $n$. This proof also requires the use of an explicit formula, however, its analysis is  much more delicate than in the GUE case. 
By a simple rescaling, 
it is clear that Johansson's result extends to Wigner matrices of the form 
\begin{equation}\label{hoo}
H = e^{-t/2} \hat H + (1-e^{-t})^{1/2} V,
\end{equation}
where $\hat H$ and $V$ are as above, and the time $t>0$ 
 is fixed independent of $n$. One can view $H$ as the evolution
 of $\hat H$ under an Ornstein-Uhlenbeck process of time $t$.

 In \cite{ERSY2},  four of the authors extended Johansson's result to obtain 
 Theorems \ref{main1} and \ref{main2} for
matrices of the form (\ref{hoo}) with a short time $t=n^{-1+\delta}$ 
for any fixed $\delta>0$.
An important ingredient was the convergence of the density of the eigenvalues 
to the semicircle law \eqref{sclaw} on $O(n^{-1+\delta})$ scales 
which was previously established in \cite{ESY3}.
Combining the result for Wigner matrices of the form (\ref{hoo}) 
for time $t=n^{-1+\delta}$ 
with a time-reversal argument, Theorems \ref{main1} and \ref{main2}
were also obtained for Wigner matrices $H$ with no Gaussian component, under the assumption that atom distribution of $H$ has a probability density 
$h (x) = e^{-g(x)}$, where $g \in C^6 (\R)$ grows at 
most polynomially fast at infinity and  $h(x) \leq C e^{-c|x|}$. For more details, we refer to \cite[Section 1]{ERSY2}.

In \cite{TVbulk},  the other two
authors introduced a different way to study the local eigenvalue statistics of random matrices, 
based on a variant of Lindeberg replacement strategy. This enables one to 
compare the statistics of a general Wigner matrix to those of a Johansson matrix, or any random matrix where the desired 
statistics is known, given that some moment condition is satisfied. 
As an application,  Theorems \ref{main1}, \ref{main2} 
were established in \cite{TVbulk} under the additional hypothesis that the atom
 distribution $x$ had vanishing third moment and was supported on at least three
 points.  For more details, we refer to \cite[Section 1]{TVbulk}.

 In this note, we observe that one can combine  the arguments from \cite{ERSY2} with the arguments of 
\cite{TVbulk} to eliminate the regularity hypotheses from the former and the
 moment and support hypotheses from the latter.  The basic idea is simple and can be described as follows. 
 The main result of \cite{TVbulk} (see \cite[Theorem 15]{TVbulk}) 
 allows one to compare the local statistics of two Wigner hermitian matrices whose atom variables $\xi, \xi'$
  have the same third and 
 fourth moments, namely $\E \xi^3 = \E \xi'^3$ and $\E \xi^4 = \E \xi'^4$. 
 A closer look reveals that it is enough  to require that the differences
 $|\E \xi^3 - \E \xi'^3|$ and $|\E \xi^4 - \E \xi'^4|$ are sufficiently small
 (in term of $n$).  This allows us  to compare the original matrix $H$ 
 with a Johansson-type matrix 
\eqref{hoo} with $t = n^{-1+\delta}$ for some small $\delta$, instead of comparing with a Johansson matrix with 
$t= \Theta (1)$ as done in \cite{TVbulk}. On the other hand, arguments from \cite{ERSY2}
guarantee  that such a Johansson matrix has the desired statistics.

One defect of our method is that the statistics need to be averaged over a 
spatial scale $\eps$ which is independent of $n$ before universality is established. 
 This appears to be a largely technical condition, arising from the lack of good
 concentration results for individual eigenvalues $\lambda_i$ for matrices of 
the form \eqref{hoo} (in particular, we do not know how to eliminate the vanishing
 third moment hypothesis from \cite[Theorem 29]{TVbulk}).  However, if we reimpose
 the hypothesis that the third moment  of the atom distribution
vanishes (i.e., $\E \, x^3 =0$), we can remove the averaging in Theorem \ref{main1}, thus refining 
  \cite[Theorem 11]{TVbulk} by removing the condition that the support must have at least 3 points
  (see Section \ref{section:remark}). 
 
 \section{Proof of theorems}

We now prove Theorem \ref{main1}.  We fix $u,\eps,f,H$; by a limiting 
argument we may take $f$ to be smooth.  We will always assume $n$ to be 
sufficiently large depending on these parameters.  Using the exponential 
decay of $x$ and a standard truncation argument (see \cite[Section 3.1]{TVbulk})
 we may assume that
\begin{equation}\label{x-bound}
x = O( \log^{O(1)} n )
\end{equation}
almost surely.  (Note that the Gaussian random variable $N(0,1/2)$ does not quite 
obey this bound, but can be modified on a set of extremely small probability to 
do so whenever necessary, so we may effectively assume that \eqref{x-bound}
 holds in this case also.)

We introduce the modified Wigner matrix
$$ H' := e^{-t/2} H + (1-e^{-t})^{1/2} V,$$
where $t := n^{-1+\delta}$  and $V$ is a GUE matrix independent of $H$. 
 We will choose $\delta=0.01$ (say), but the proof below actually requires
only $\delta<1/2$.  

 Observe that $H'$ is another Wigner random matrix, with atom distribution $x'$ given by
$$ x' := e^{-t/2} x + (1-e^{-t})^{1/2} g$$
where $g \equiv N(0,1/2)$ is a Gaussian random variable independent of $x$. 
 In particular we have the moment comparision bounds
\begin{equation}\label{ex0}
\E (x')^j = \E x^j
\end{equation}
for $j=0,1,2$, and
\begin{equation}\label{exx}
\E (x')^j = \E x^j + O( n^{-1+0.01} )
\end{equation}
for $j=3,4$.

Let $(p'_n)^{(k)}$ be the $k$-point correlation function associated to $H'$.  
The first step is to show that Theorem \ref{main1} is valid for $H'$:

\begin{proposition}\label{prop5} The quantity
$$ \frac{1}{2\eps} \int_{u-\eps}^{u+\eps}  \int_{\R^k} f(\alpha_1,\ldots,\alpha_k)
 \frac{1}{[\rho_{sc}(u')]^k} (p'_n)^{(k)}\Big(u'+\frac{\alpha_1}{n \rho_{sc}(u')},
 \ldots, u'+\frac{\alpha_k}{n \rho_{sc}(u')}\Big)\ d\alpha_1 \ldots d\alpha_k du'$$
converges as $n \to \infty$ to
$$ \int_{\R^k} f(\alpha_1,\ldots,\alpha_k) \det( K(\alpha_i,\alpha_j) )_{i,j=1}^k\ 
d\alpha_1 \ldots d\alpha_k.$$
\end{proposition}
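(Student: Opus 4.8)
The plan is to recognize $H'$ for what it is: a Gaussian-divisible matrix of exactly the form \eqref{hoo}, with base matrix $\hat H = H$ and time $t = n^{-1+\delta}$, $\delta = 0.01 < 1/2$. The main result of \cite{ERSY2} establishes precisely the conclusion of Theorem \ref{main1} — that the averaged $k$-point correlation function converges to the sine-kernel determinant — for all matrices of the form $e^{-t/2}\hat H + (1-e^{-t})^{1/2} V$ with $t = n^{-1+\delta}$ for a fixed $\delta>0$, whenever $\hat H$ is a Wigner Hermitian matrix with subexponentially decaying entries and $V$ an independent GUE matrix. Since Proposition \ref{prop5} is the specialization of that statement to $\hat H = H$, the task reduces to checking that our $H$ meets the hypotheses of \cite{ERSY2}, and then quoting it.

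The one point that genuinely needs attention is the truncation \eqref{x-bound}. Having conditioned $x$ to be $O(\log^{O(1)} n)$ we have perturbed its mean and variance, but only by amounts that are super-polynomially small in $n$ (the subexponential decay forces the tail beyond $\log^{O(1)} n$ to have mass $n^{-\omega(1)}$). So before invoking \cite{ERSY2} I would recenter and rescale the truncated atom variables to restore mean zero and variance $1/2$ (respectively $1$ on the diagonal); the resulting matrix is still a Wigner Hermitian matrix, its entries now have bounded — a fortiori subexponential — decay, and the super-polynomially small corrections are negligible at the $1/n$-resolution of the statistics appearing in \eqref{episode}, which can be made precise exactly as in the truncation discussion of \cite[Section 3.1]{TVbulk}. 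With that done, $H'$ literally has the form \eqref{hoo} and the theorem of \cite{ERSY2} applies.

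A minor bookkeeping matter is the averaging window: \cite{ERSY2} proves the convergence for the correlation functions averaged over an energy interval, and if the interval there is taken to be smaller than our fixed $\eps$, the statement for $[u-\eps,u+\eps]$ follows by expressing its average as an average of the smaller-window averages and passing to the limit, the integrand being uniformly controlled so that dominated convergence applies. As for the ``main obstacle'': there is no new difficulty here — all the real work is already done in \cite{ERSY2}, whose essential analytic input is the local semicircle law of \cite{ESY3} controlling the eigenvalue density of $\hat H = H$ down to scales $O(n^{-1+\delta})$, which is exactly what lets Johansson's Gaussian-divisible argument be pushed to times as short as $t = n^{-1+\delta}$. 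The proof of the proposition therefore consists of assembling these cited inputs for the matrix at hand, with the only local care being the harmlessness of the truncation.
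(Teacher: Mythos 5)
Your overall route is the same as the paper's: observe that $H'$ is literally a matrix of the form \eqref{hoo} with $\hat H = H$ and $t = n^{-1+\delta}$, and quote the short-time Gaussian-divisible result of \cite{ERSY2} (Proposition 3.1 there), with the dominated-convergence / windowing step to pass between the fixed-$u'$ statement and the $\eps$-averaged one. The truncation discussion and the identification of the local semicircle law as the essential analytic input are also as in the paper.

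There is, however, one step you pass over too quickly, and it is the only place in the paper's proof where real verification is needed. You assert that \cite{ERSY2} applies ``whenever $\hat H$ is a Wigner Hermitian matrix with subexponentially decaying entries.'' That is not quite what is on the shelf: the input to \cite{ERSY2} is the local semicircle law \eqref{eq:semi} on scales $n^{-1+\delta}$, and this was established in \cite{ESY3} only under the Gaussian-decay hypothesis $\E e^{\beta x^2}<\infty$. Your truncation to $x = O(\log^{O(1)} n)$ does not rescue this, because the resulting bound is not uniform in $n$, so the constants in \cite{ESY3} degrade. The paper closes this gap by invoking the version of \eqref{eq:semi} proved in \cite{TVbulk} under subexponential decay, which comes with a \emph{weaker} probability bound $\exp(-\omega(\log n))$ in place of $\exp(-c\eps\sqrt{n\eta})$, and then checking that the proof of \cite[Proposition 3.1]{ERSY2} still goes through with this weaker tail bound (alternatively, one refines the argument of \cite[Section 5]{ERSY2}). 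Without some such substitute for \eqref{eq:semi} and the accompanying check, the citation of \cite{ERSY2} is not yet justified for the class of atom distributions considered here. Also, for the dominated convergence in $u'$ you should note, as the paper does, that the integrand is dominated by $C_k\,\E N_I^k$ with $N_I$ the eigenvalue count in an interval of length $C/n$, whose boundedness again requires the subexponential-decay extension of \cite[Theorem 5.1]{ESY3}.
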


\begin{proof} By the dominated convergence theorem, it suffices to show that
\begin{equation}\label{eq:ufix}
\begin{split}
\lim_{n\to \infty}  \int_{\R^k} f(\alpha_1,\ldots,\alpha_k) &
\frac{1}{[\rho_{sc}(u)]^k} (p'_n)^{(k)} \Big(u+\frac{\alpha_1}{n \rho_{sc}(u)}, 
\ldots, u+\frac{\alpha_k}{n \rho_{sc}(u)}\Big)\ d\alpha_1 \ldots d\alpha_k \\
 &= \int_{\R^k} f(\alpha_1,\ldots,\alpha_k) \det( K(\alpha_i,\alpha_j) )_{i,j=1}^k\
 d\alpha_1 \ldots d\alpha_k. \end{split}\end{equation}
for every fixed $-2 < u < 2$. The dominated convergence theorem
can be applied since the integral on the lhs of \eqref{eq:ufix} can be bounded by 
$C_k\, \E \, N_I^k$, where $N_I$ is the number of eigenvalues in
an interval $I$ of size $C/n$ about $u$. The boundedness of $\E \, N_I^k$
follows from  \cite[Theorem 5.1]{ESY3}. Originally this result was stated
for atom distribution with Gaussian decay, but the proof can be
modified for random variables with subexponential decay
and satisfying \eqref{x-bound}.

Eq. (\ref{eq:ufix}) is proven in Proposition 3.1 of \cite{ERSY2}.
The proof is based on an explicit integral representation for the marginal 
$(p'_n)^{(k)}$. The only assumption used to prove (\ref{eq:ufix}) in \cite{ERSY2}
 is the asymptotic validity of the local semicircle law for the density of the 
eigenvalues of the 
Wigner matrix $H$ (with no Gaussian part) on scales of order $n^{-1+\delta}$.
 More precisely, if
\[ m_n (z) = \frac{1}{n} \tr \frac{1}{H-z} , \quad \mbox{and} \quad
 m_{sc} (z) = \int \frac{ds \, \varrho_{sc} (s)}{s-z}
\] 
denote the Stieltjes transforms
of the empirical eigenvalue distribution and of the semicircle law, respectively,
then we need to know 
\cite[Eq. (3.9)]{ERSY2} 
that for any $\kappa,\delta>0$ there exist constants $c,C>0$
such that
\begin{equation}\label{eq:semi}
 \P \left( \sup_{E \in (-2+\kappa ; 2 - \kappa)} 
\left| m_n (E+i\eta) - m_{sc} (E+i\eta)\right| \geq \eps \right) 
\leq C e^{-c\eps \sqrt{n \eta}} 
\end{equation} 
for $ n^{-1+\delta}\le\eta\le 1$ and
for all $\eps >0$ sufficiently small.

Eq. (\ref{eq:semi}) has been established in \cite{ESY3} under the assumption of 
Gaussian decay of the atom distribution, that is assuming that 
$\E e^{\beta x^2} < \infty$, for some $\beta >0$. 
To reduce this assumption to the desired sub-exponential decay assumption, 
we can  use   a result in \cite{TVbulk}
(see the paragraph following \cite[Lemma 60]{TVbulk})  which proved \eqref{eq:semi} under this weaker assumption with 
a weaker probability bound $\exp(-\omega (\log n))$, where $\omega(\log n)$ denotes a quantity whose ratio with $\log n$ goes to infinity as $n \to \infty$.  One can also follow the ideas in 
 Section 5 of \cite{ERSY2}, where it is shown that the 
Gaussian decay can be relaxed to exponential decay
(at the expense of a slightly weaker probability bound). These ideas can be further 
refined to deal with sub-exponential decay.

Following the proof of Proposition 3.1 in \cite{ERSY2},
it is clear that the argument holds 
using this weaker bound on the probability  as well. This concludes the proof of Proposition 
\ref{prop5}.
\end{proof}

To conclude the proof of Theorem \ref{main1}, it thus suffices to show that 
the expression \eqref{episode} only changes by $o(1)$ when we replace $H$ 
with $H'$, where we use $o(1)$ to denote an expression that goes to zero 
as $n \to \infty$.

From the definition of $p_n^{(k)}$, we can rewrite \eqref{episode} as
\begin{equation}\label{central}
\frac{1}{n} \sum_{1 \leq i_1,\ldots,i_k \leq n \hbox{ distinct}} 
\E  \, G( n \lambda_{i_1}, \ldots, n \lambda_{i_k} )
\end{equation}
where
\begin{equation}\label{G-def}
G(y_1,\ldots,y_k) := \frac{n}{2\eps} \int_{u-\eps}^{u+\eps} 
\frac{d u'}{[\rho_{sc}(u')]^k} f\Big(\rho_{sc}(u') (y_1 - nu'), 
\ldots, \rho_{sc}(u') (y_k - nu')\Big).
\end{equation}
On the other hand, by \cite[Proposition 61]{TVbulk} (or \cite[Theorem 5.1]{ESY3}), 
taking advantage of the hypothesis \eqref{x-bound}, we see that, with probability 
$1-O(n^{-100})$ (say),
$$ \# \{ 1 \leq i \leq n: \lambda_i \in I \} \leq \widetilde{C} n |I|$$
for all intervals $I \subset [u-\eps,u+\eps]$ with $|I| \geq \log^C n / n$, for 
some sufficiently large absolute constant $C$, $\widetilde{C}$, where $\widetilde{C}$ can depend on $u$ and $\eps$.  Because of this, and the compact 
support of $f$, we can restrict the summation in \eqref{central} to those 
$i_1,\ldots,i_k$ for which $i_j = i_1 + O( \log^{O(1)} n )$ for all $1 \leq j \leq k$,
 while only paying an acceptable error of $o(1)$. In particular, there are now only
 $O( n \log^{O(1)} n )$ summands in \eqref{central}.  Similarly, using the convergence
 to the Wigner semicircular law (see e.g. \cite{Pas}) 
 we may restrict $i_1,\ldots,i_k$ to
 the interval $[\delta n, (1-\delta) n]$ for some fixed $\delta > 0$ independent of $n$.

A routine computation shows that $G$ enjoys the derivative bounds
$$ |\nabla^j G(y_1,\ldots,y_k)| = O(1)$$
for all $y_1,\ldots,y_k$ and $0 \leq j \leq 5$, where the implied constant in the
 $O()$ notation can depend on $u, \eps, f$.  

Suppose first that that the random variables $x, x'$ matched to fourth order.  
One could then apply the four moment theorem in \cite[Theorem 15]{TVbulk} to 
conclude for each $i_1,\ldots,i_k \in [\delta n, (1-\delta) n]$, the quantity 
$\E  G( n \lambda_{i_1}, \ldots, n \lambda_{i_k} )$ changes by $O(n^{-c})$ for 
some absolute constant $c>0$, uniformly in $i_1,\ldots,i_k$, when $H$ is replaced
 by $H'$.  Inserting this into \eqref{central}, we obtain the desired claim.

In our situation, $x$ and $x'$ do not match exactly to fourth order, but only to 
second order.  However, we have approximate matching to third and fourth order thanks
 to \eqref{exx}, and this is enough to recover the conclusions of the four moment
 theorem.  Indeed, an inspection of the proof of \cite[Theorem 15]{TVbulk} in 
\cite[Section 3.3]{TVbulk} reveals that the matching moments condition is used 
in only one place, namely to show that the expectation of an expression of the form
$$
 F(0) + \nabla F(0)( h_{\ell k} ) + \ldots + \frac{1}{4!} 
\nabla^4 F(0)( h_{\ell k}, \ldots, h_{\ell k}) + O( n^{-5/2 + o(1)} )
$$
only changes by at most $n^{-2-0.1}$ (say) when one replaces $h_{\ell k} = 
n^{-1/2} ( x_{\ell k} + \sqrt{-1} y_{\ell k} )$ with $h'_{\ell k} := 
n^{-1/2} ( x_{\ell k} + \sqrt{-1} y_{\ell k} )$, where $1 \leq \ell < k \leq n$, 
and $F: \C \to \R$ is a function obeying the derivative bounds
$$ |\nabla^j F(0)| \leq C n^{-j + o(1)}$$
for $0 \leq j \leq 4$.  But the expectation of the first three terms of this
 expansion do not change when replacing $h_{\ell k}$ with $h'_{\ell k}$ thanks to
 \eqref{ex0}, and the next two terms change by $O( n^{-3+o(1)} n^{3/2} n^{-1+0.01} )$ 
and $O( n^{-4+o(1)} n^{4/2} n^{-1+0.01} )$ respectively, and the claim follows.

\begin{remark} The reader will observe that there is some room to spare here regarding powers of $n$.  Indeed, a more careful examination of the argument reveals that one can sharpen Theorem \ref{main1} by not keeping $\eps$ fixed with respect to $n$, but instead letting it decay as $\eps = n^{-1/2 + \delta}$ for any $0 < \delta < 1/2$.  This should not be the optimal result, however; the correct scale should be $O(n^{-1+\delta})$ (and in fact one should not need to average in $u$ at all).  Unfortunately, our methods so far can only achieve this by putting more moment or regularity hypotheses on the distribution $x$, see for instance Section \ref{section:remark}.
\end{remark}

We now sketch the proof of Theorem \ref{main2}.
The first step is to prove that the empirical gap distribution $\Lambda_n'$
of the matrix $H'$ satisfies the equation \eqref{limgap}. 
This follows from the same argument
used in the proof of of Theorem 1.2 in \cite{ERSY2}. 
The only difference is that the parameter
$\eps$ in \eqref{limgap} was replaced by $n^{-1 + \delta}$
 (with some $0<\delta <1$) in \cite{ERSY2}.
Since the change of the density $\varrho_{sc}(u')$ vanishes 
as $\eps \to 0$, the same proof applies to the current
setting, using that  the limiting  function on the rhs of \eqref{limgap}
is continuous in $s$.

In the second step we deduce \eqref{limgap} for $H$ by using
that the expected values of
observables   of the form \eqref{central} are asymptotically
the same for $H$ and $H'$. By  the exclusion-inclusion
principle, $\Lambda_n$ can be written as an alternating series
of expressions in the form \eqref{central} with functions 
$G$ which are products of characteristic
functions of intervals of length one  or larger.
These characteristic functions can be approximated
by  bounded smooth functions  in $L^1$-sense
with arbitrary precision $\kappa$. Controlling the alternating
series by its last term,
letting first $n\to\infty$ then $\kappa\to 0$
and using the continuity in $s$ 
of the limiting function, we obtain \eqref{limgap}
for $H$ as well.

\section{Concluding remarks} \label{section:remark} 

 If one assumes the additional third moment condition
 $\E x^3 = 0$ on the moment condition, then it was shown in \cite[Theorem 29]{TVbulk}
 that the eigenvalues $\lambda_i$ with $i \in [\delta n, (1-\delta) n]$ obeyed the 
localization property
$$ \lambda_i = t(\frac{i}{n}) + O(n^{-1+c})$$
with probability $1-O(n^{-C})$ for any $C, c>0$ (and $n$ sufficiently large 
depending on $C,c,\delta$), where $t(a)$ was defined by the formula
$$ a := \int_{-2}^{t(a)} \rho_{sc}(x)\ dx.$$
Because of this localization, it is possible to eliminate the averaging over
 $u'$ in Theorem \ref{main1}, and to reduce the averaging in Theorem \ref{main2}
 to an interval of $O(n^\delta)$ eigenvalues rather than $O(n)$, by combining the 
above the arguments with those used to prove \cite[Theorems 9,11]{TVbulk}.  The net
 effect of this is to eliminate the hypothesis in those theorems that $x$ is 
supported on at least three points. Thus, for instance, we can take  $x$ to be the Bernoulli 
variable  that takes values  $\pm 1$ with an equal probability $1/2$.

If the third moment condition from \cite[Theorem 29]{TVbulk} could be omitted, 
then one could similarly reduce the averaging required for Theorems \ref{main1}, 
\ref{main2} even when the third moment was non-zero. To do so, however, would 
require either a relaxation of the moment conditions needed  in \cite[Theorem 15]{TVbulk}, or else an analogue of \cite[Theorem 29]{TVbulk}
 for the matrices $H'$ studied here.  It seems plausible that at least one of these 
two approaches could eventually be made to work, but this does not appear to be 
easily deducible from the existing literature.

Another remark is that our theorems hold for a more general model of random matrices, where the real and imaginary parts of the entries are not necessarily independent. The reason is that \cite[Theorem 15]{TVbulk} (and its variant used here) do not require this assumption 
(see also the end  of \cite[Section 1]{TVbulk}).

\end{document}